\numberwithin{equation}{section}
\title[Isometries of pseudo-Finsler structures]{On the Lie group structure\\ of pseudo-Finsler isometries}%
\author[R. Gallego Torrom\'e, P. Piccione]{Ricardo Gallego Torrom\'e \and Paolo Piccione}
\thanks{The first author is financially supported by Fapesp, grant n.\ 2010/11934-6. The second author is partially sponsored by Fapesp
and CNPq, Brazil.}
\email{rigato39@gmail.de, piccione.p@gmail.com}
\address{Departamento de Matem\'atica\hfill\break\indent Universidade de S\~ao Paulo\hfill\break\indent Brazil}
\date{August 14th,  2013}
\subjclass[2010]{53B40, 53C60, 22F50}
\begin{document}

\newtheorem{theorem}{Theorem}
\newtheorem{assertion}[theorem]{Assertion}
\newtheorem{proposition}[theorem]{Proposition}
\newtheorem{lemma}[theorem]{Lemma}
\newtheorem{definition}[theorem]{Definition}
\newtheorem{claim}[theorem]{Claim}
\newtheorem{corollary}[theorem]{Corollary}
\newtheorem{observation}[theorem]{Observation}
\newtheorem{conjecture}[theorem]{Conjecture}
\newtheorem{question}[theorem]{Question}
\newtheorem{example}[theorem]{Example}

\newtheorem*{theorem-A}{Theorem A}
\newtheorem*{theorem-B}{Theorem B (S. Deng and Z. Hou)}
\newtheorem*{corollary-n}{Corollary}

\theoremstyle{remark}\newtheorem*{remark}{Remark}
\maketitle

\begin{abstract}
Using an extension to isometries of the associated Sasaki structure, we establish a Lie transformation group structure for the set of
isometries of a pseudo-Finsler conical metric.
\end{abstract}

\begin{section}{Introduction}
A very classical problem in mathematics is to establish which topological groups have a Lie group structure (Hilbert's fifth problem).
A complete answer to this question has been given by several authors in the fifties, see for instance \cite{Glea52, Yam53}.
According to Gleason and Yamabe's result, a topological group has a Lie group structure compatible with its topology if and only if it does not contain \emph{small} subgroups, i.e., if there is some neighborhood of the identity that does not contain any proper subgroup.
A natural and important extension of this problem in Geometry, is to establish when the action of some group $G$ on a differentiable manifold
$M$ is a \emph{Lie transformation group}. Recall that a Lie transformation group consists of a Lie group $G$ and a smooth action
of $G$ on a differentiable manifold $M$ by diffeomorphisms.
By a result of Kuranishi (see \cite{Kur50}), an effective\footnote{The action of a group $G$ on a set $X$ is effective if the unique element of $G$ that fixes all the elements of $X$ is the identity.} action (by diffeomorphisms) of a \emph{locally compact} group on a smooth manifold is a Lie transformation group. This result is particularly useful
in order to establish the existence of a Lie transformation group structure for groups of distance preserving maps of metric spaces.
Namely, the isometry group of a locally compact metric space is a locally compact topological group, endowed with the
compact--open topology. By a well known result of Myers and Steenrod (see \cite{MySt39}), isometries of a Riemannian manifold
coincide with isometries of the underlying metric structure; in particular, the natural action of the isometry group of
a Riemannian manifold is a Lie transformation group. A similar argument has been employed more recently by Deng and Hou to show that
the group of isometries of a (non necessarily reversible) Finsler manifold is a Lie transformation group, see \cite{DengHou02}.
We will see here that a natural averaging procedure allows to reduce the Finsler case to the standard Riemannian case (Theorem~B).
Myers and Steenrod's result has been further developed by Palais (see \cite{palais}), who showed that the differentiable structure of
a Riemannian manifold can be recovered merely from its metric space structure.
\smallskip

When it gets to isometries of metrics with signature, like Lorentzian metrics, or more generally pseudo-Riemannian metrics,
there is no naturally associated metric space structure, and thus Myers and Steenrod's techniques do not apply.
A beautiful general theory, developed mostly in \cite{stern} and \cite{Koba}, studies the question of establishing a differentiable
structure for the set of automorphisms of a $G$-structure on a smooth manifold $M$. Such a theory allows to reduce to a relatively
simple algebraic problem the question of establishing for which Lie group $G\subset\mathrm{GL}(n)$, given any $G$-structure
$P$ on any $n$-manifold $M$, the group of automorhisms of $P$ is a (finite dimensional) Lie subgroup of the group of diffeomorphisms of $M$.
Curiously enough, such algebraic problem only involves the Lie algebra of $G$. The result applies, in particular,
to all orthogonal groups $\mathrm O(n,k)$, as well as conformal groups; thus, the set of isometries or the set of conformal
diffeomorphisms of any pseudo-Riemannian manifold is a Lie transformation group. An interesting issue of the theory is the question
of regularity of the automorphisms, and the corresponding topology in the automorphism group. Thanks to Myers--Steenrod's (or Palais')
result, for Riemannian isometries continuity is equivalent to smoothness, and all $C^k$-topologies\footnote{%
By $C^k$-topology on the group of diffeomorphisms of a manifold, we mean the \emph{weak Whitney $C^k$-topology}, i.e., the topology of uniform convergence on
compacta of all derivatives up to order $k$.}
coincide in the isometry group, $k=0,\ldots,+\infty$.
In particular,  the group of Riemannian isometries endowed with the compact--open topology is a Lie transformation group.
By the $G$-structure automorphism theory, for pseudo-Riemannian isometries one must consider the $C^1$-topology, while for conformal
diffeomorphisms one has to consider the $C^2$-topology\footnote{Although in the Riemannian case, by a somewhat involved argument, the compact--open
topology coincides with all other $C^k$-topologies in the conformal group.}. As a matter of facts,
an elementary argument using the exponential map shows that, also in the isometry group of a general pseudo-Riemannian manifold
the $C^1$-topology coincides with the compact-open topology.
Interestingly enough, the differentiability class of automorphisms
of a $G$-structure coincides with the so-called \emph{order} of the $G$-structure, which is roughly speaking the minimal order of derivatives
at a fixed point needed to determine uniquely an automorphism of the given structure. Finiteness of the order of a $G$-structure is the key property
for the development of the theory.
\smallskip

It is an important question to study automorphisms of pseudo-Finsler structures, which arise naturally in General Relativity.
A pseudo-Finsler structure\footnote{There are several notions of pseudo-Finsler structures in the literature.
Here we use a quite general notion, sometimes called \emph{conic pseudo-Finsler structure}. A somewhat different notion is
given in \cite{Beem70}. See \cite{JavSan12} for a discussion on the several notions of pseudo-Finsler manifolds.}
on a (connected) manifold $M$ consists of an open subset $\mathcal T\subset T_0M$,
where $T_0M$ denotes the tangent bundle with its zero section removed, and a smooth function $F:\mathcal T\to\mathds R^+$
satisfying the following properties:
\begin{itemize}
\item for all $p\in M$, the intersection $\mathcal T_p=\mathcal T\cap T_pM$ is a non empty open cone of the tangent space $T_pM$;\smallskip

\item $F(tv)=tF(v)$ for all $v\in\mathcal T$ and all $t>0$;
\item for all $v\in\mathcal T$, the second derivative $g_v=\left(\frac{\partial^2(F^2)}{\partial y_i\partial y_j}(v)\right)_{ij}$ in the vertical directions
is nondegerate.
\end{itemize}
By continuity, the \emph{fundamental tensor} $g_v$ has constant index, which is called the index of the pseudo-Finsler structure.
The case when $\mathcal T=T_0M$ and the index of $g_v$ is zero, i.e., $g_v$ is positive definite for all $v$, is the standard Finsler structure.
When $g_v$ does not depend on $v$, then we have a standard pseudo-Riemannian manifold.
An automorphism (or isometry)
of the pseudo-Finsler structure $(M,\mathcal T,F)$ is a diffeomorphism $f$ of $M$, with $\mathrm df(\mathcal T)=\mathcal T$ and
$F\circ\mathrm df=F$. Clearly, the set $\mathrm{Iso}(M,\mathcal T,F)$ of such automorphisms is a group with respect to composition,
and one has a natural action of $\mathrm{Iso}(M,\mathcal T,F)$ on $M$. In order to establish a Lie transformation group structure
for this set, which is the purpose of the paper, one cannot apply metric space techniques, nor $G$-structure techniques.
Namely, it is not hard to show that the $G$-structure corresponding to Finsler or pseudo-Finsler metrics has never finite order.
Similarly, also the averaging technique mentioned above for the standard Finsler example does not work in the pseudo-Finsler
case, due to the fact that:
\begin{itemize}
\item sums (or even convex combinations) of non positive definite nondegenerate symmetric bilinear forms may fail to be nondegenerate;
\item at every point $p\in M$, the indicatrix $\Sigma_p=F^{-1}(1)\cap\mathcal T_p$ is never compact.
\end{itemize}
In this paper we will use general techniques from calculus with non linear connections in vector bundles and sprays to prove the following
results.
\begin{theorem-A}
The group of isometries of a pseudo-Finsler structure $(M,F)$, endowed with the $\mathcal C^1$-topology, is a Lie transformation group
of $M$.
\end{theorem-A}
The same proof of Theorem~A will also yield the following:
\begin{corollary-n}
An isometry of a pseudo-Finsler structure $(M,F)$ is a $C^\infty$-map, and it is completely determined by its second jet at any point.
\end{corollary-n}
We will also discuss briefly the averaging technique mentioned above, that allows to reduce Deng--Hou's result to the standard
Riemannian case, proving:
\begin{theorem-B}
The group of isometries of a Finsler structure $(M,F)$, endowed with the compact--open topology, is a Lie transformation group
of $M$. Finsler isometries are smooth, and they are uniquely determined by their first jet at any point of $M$.
\end{theorem-B}
\end{section}
The proof of our results will make it clear that totally analogous results hold in the case of different notions of pseudo-Finlser structure.
More precisely, a Lie transformation group structure exists for any group of diffeomorphisms of a manifold $M$
that preserve a geodesic spray defined in suitable open subsets of $TM$, see next section for details.
\smallskip

\noindent\textbf{Aknowledgement.}\enspace The authors gratefully aknowledge the help provided by Henrique de Barros Correia Vit\'orio
during fruitful conversations on the Sasaki metric associated to a pseudo-Finsler structure.

\begin{section}{Proofs}
\subsection*{Quasi-tangent structure of $\mathbf{TM}$}
In order to define a (non linear) connection associated to a pseudo-Finsler structure, we will follow Grifone's terminology, see \cite{Grifone72}.
Let $\pi:TM\to M$ be the canonical projection; for $v\in TM$, denote by $\mathrm{Ver}_v=\mathrm{Ker}\left(\mathrm d\pi_v\right)$ the vertical subspace of $T_v(\mathcal T)$; $\mathrm{Ver}$ will denote the vertical distribution on $TM$.
First, one defines the \emph{quasi-tangent structure of $TM$} as the $(1,1)$ tensor $\mathcal J$ in $TMT$ by:
\[\mathcal J(X)=\mathfrak i_v\left(\mathrm d\pi(X)\right),\]
where $v\in T_pM$, $X\in T_v(TM)$, and $\mathfrak i_v:T_pM\to\mathrm{Ver}_v$ is the canonical identification ($\mathfrak i_v$ is the differential at $v$ of the inclusion
$T_pM\hookrightarrow TM$).
\begin{lemma}\label{thm:diffoespreserve}
If $f:M\to M$ is a diffeomorphism (of class $C^2$); the quasi-tangent structure $\mathcal J$ of $TM$ is invariant by the diffeomorphism $\mathrm df:TM\to TM$,
i.e., the pull-back $(\mathrm df)^*(\mathcal J)$ equals $\mathcal J$.
\end{lemma}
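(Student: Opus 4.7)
The plan is to unpack the statement $(\mathrm df)^*\mathcal J=\mathcal J$ pointwise and reduce everything to the linearity of the differential $\mathrm df_p:T_pM\to T_{f(p)}M$. Set $F=\mathrm df:TM\to TM$ for brevity. Since $F$ is a diffeomorphism (of class $C^1$ because $f$ is $C^2$), the pull-back $(F^*\mathcal J)_v(X)$ equals $(\mathrm dF_v)^{-1}\bigl(\mathcal J_{F(v)}(\mathrm dF_v(X))\bigr)$, so the claim is equivalent to the intertwining identity
\[
\mathrm dF_v\bigl(\mathcal J_v(X)\bigr)\;=\;\mathcal J_{F(v)}\bigl(\mathrm dF_v(X)\bigr),\qquad v\in TM,\ X\in T_v(TM).
\]

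First I would use the naturality of $\pi$, namely $\pi\circ F=f\circ\pi$, differentiated to obtain $\mathrm d\pi_{F(v)}\circ \mathrm dF_v=\mathrm df_p\circ\mathrm d\pi_v$ (with $p=\pi(v)$). Applying this to $X$ and writing $w:=\mathrm d\pi_v(X)\in T_pM$, the right-hand side of the identity becomes $\mathfrak i_{F(v)}\bigl(\mathrm df_p(w)\bigr)$. So everything reduces to showing the naturality of the vertical identification:
\[
\mathrm dF_v\circ\mathfrak i_v\;=\;\mathfrak i_{F(v)}\circ \mathrm df_p.
\]

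The second step is to prove this last identity by computing both sides on an explicit curve. By definition, $\mathfrak i_v(w)$ is the velocity at $t=0$ of the curve $\gamma(t)=v+tw$, which lies in the fibre $T_pM$. Pushing forward by $F=\mathrm df$, the curve $F\circ\gamma$ reads $t\mapsto \mathrm df_p(v+tw)=\mathrm df_p(v)+t\,\mathrm df_p(w)$, where I have used precisely the linearity of $\mathrm df_p$ on the fibre $T_pM$. This image curve lies in the fibre $T_{f(p)}M$ over $f(p)$, and its velocity at $t=0$ is exactly $\mathfrak i_{F(v)}\bigl(\mathrm df_p(w)\bigr)$, establishing the desired equality.

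The only step requiring a small amount of care is keeping track of the two different ambient tangent spaces $T_v(TM)$ and $T_{F(v)}(TM)$ and the respective vertical subspaces, but no real obstacle arises: the whole content of the lemma is that fibre differentials of a lift $\mathrm df$ agree, through the canonical fibre identifications, with the pointwise action of $\mathrm df_p$. The regularity hypothesis $f\in C^2$ is used exclusively to guarantee that $\mathrm dF$ makes sense as a continuous map, i.e., that $\mathcal J$ transforms tensorially at all.
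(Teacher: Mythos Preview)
Your proof is correct and follows essentially the same route as the paper: the two naturality relations you establish, $\mathrm d\pi_{F(v)}\circ\mathrm dF_v=\mathrm df_p\circ\mathrm d\pi_v$ and $\mathrm dF_v\circ\mathfrak i_v=\mathfrak i_{F(v)}\circ\mathrm df_p$, are precisely the two commutative diagrams the paper writes down, and your curve argument for the second is exactly what the paper means by ``differentiating the inclusion diagram.''
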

\begin{proof}
Since $\mathrm df$ send fibers of $TM$ into fibers, then clearly $\mathrm df$ preserves the vertical distribution, which is the tangent distribution to the fibers.
For $p\in M$ and $v\in\mathcal T_p$ (in fact, for $v\in T_pM$), one has the following commutative diagrams:
\begin{equation}\label{eq:2commdiagramas}
\xymatrix{T_v(TM)\ar[rr]^{\mathrm d^2f}\ar[d]_{\mathrm d\pi}&&T_{\mathrm df(v)}(TM)\ar[d]^{\mathrm d\pi}\cr T_pM\ar[rr]_{\mathrm df}&&T_{f(p)}M}
\qquad \xymatrix{T_pM\ar[rr]^{\mathrm df}\ar[d]_{\mathfrak i_v}&&T_{f(p)}M\ar[d]^{\mathfrak i_{\mathrm df(v)}}\cr \mathrm{Ver}_v\ar[rr]_{\mathrm d^2f}&&\mathrm{Ver}_{\mathrm d^2f}}.
\end{equation}
The commutativity of the first diagram is obvious. For the second, it suffices to differentiate the commutative diagram:
\[\xymatrix{T_pM\ar[rr]^{\mathrm df(p)}\ar[d]_{\text{inclusion}}&& T_{f(p)}M\ar[d]^{\text{inclusion}}\cr TM\ar[rr]_{\mathrm df}&&TM.}\]
The equality $(\mathrm df)^*(\mathcal J)=\mathcal J$ follows readily from \eqref{eq:2commdiagramas}.
\end{proof}
\subsection*{Orthogonal distribution associated to a pseudo-Finsler structure}
Consider now a pseudo-Fisler structure $(M,\mathcal T,F)$, and let $S$ denote the vector field in $\mathcal T$ given by the geodesic spray
of $F$.
There is a complement to this space associated to $S$, the \emph{horizontal} space, which is defined as follows.
 The spray $S$ satisfies the identity\footnote{%
The identity $\mathcal J(S)=C$ means that the integral curves of $S$ are of the form $t\mapsto\gamma'(t)\in \mathcal T$, for some curve $t\mapsto\gamma(t)\in M$.
Such curves $\gamma$ are precisely the geodesics of $S$.
The identity $[C,S]=S$ means that affine reparameterizations of geodesics of $S$ are geodesics.} $\mathcal J(S)=C$, where $C$ is the \emph{tautological vertical field of $TM$}, or \emph{Liouville field}, (i.e., $C_v=\mathfrak i_v(v)$), and the identity $[C,S]=S$, where $[\cdot,\cdot]$ are the Lie brackets of $TM$.
Moreover, the Lie derivative $\Gamma_S=-\mathcal L_S(\mathcal J)$ of the quasi-tangent structure $\mathcal J$ is a $(1,1)$ tensor on $\mathcal T$ that satisfies (see \cite{Grifone72}):
\begin{equation}\label{eq:LiederJ}
(\Gamma_S)^2=\mathrm{Id},\qquad \mathrm{Ker}\left(\Gamma_S+\mathrm I\right)=\mathrm{Ver}.
\end{equation}
By \eqref{eq:LiederJ}, $\mathrm{Hor}^S:=\mathrm{Ker}\left(\Gamma_S-\mathrm{Id}\right)$ is a distribution in $\mathcal T$ which is complementary to $\mathrm{Ver}$,
and it will be called the \emph{orthogonal distribution associated to the pseudo-Finsler structure $(M,\mathcal T,F)$}.
\subsection*{The Sasaki metric}
Denote by $k$ the index of the fundamental tensor $g_v$ of the pseudo-Finsler structure $(M,\mathcal T,F)$; we will now define a pseudo-Riemannian metric $g^F$ on $\mathcal T$ having index
$2k$. For $v\in\mathcal T$, the spaces $\mathrm{Ver}_v$ and $\mathrm{Hor}^S_v$ are $g^F$-orthogonal. The restriction of $g^F$ to $\mathrm{Ver}_v$ is
the push-forward of $g_v$ by the isomorphism $\mathfrak i_v:T_pM\to\mathrm{Ver}_v$, and the restriction of $g^F$ to $\mathrm{Hor}^S_v$ is
the pull-back of $g_v$ by the isomorphism $\mathrm d\pi_v:\mathrm{Hor}^S_v\to T_pM$. Clearly, $g^F$ is a smooth $(0,2)$-tensor field on $\mathcal T$
which is everywhere nondegenerate and of index $2k$; the tensor $g^F$ is the \emph{Sasaki metric} of the pseudo-Finlser structure $(M,\mathcal T,F)$.

The central result is the following:
\begin{proposition}\label{thm:FinslSasisometries}
Let $f:M\to M$ be an isometry of $(M,\mathcal T,F)$, i.e., $f:M\to M$ is a diffeomorphism of class $C^2$, with $\mathrm df(\mathcal T)=\mathcal T$, and
$(\mathrm df)^*(F)=F$. Then, $\mathrm df:\mathcal T\to\mathcal T$ is an isometry of the Sasaki metric $g^F$.
\end{proposition}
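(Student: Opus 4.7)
The Sasaki metric $g^F$ is determined, at each $v \in \mathcal T$, by the $g^F$-orthogonal splitting $T_v\mathcal T = \mathrm{Ver}_v \oplus \mathrm{Hor}^S_v$, with the restrictions of $g^F$ to each summand specified by the fundamental tensor $g_v$ via the canonical isomorphisms $\mathfrak i_v : T_pM \to \mathrm{Ver}_v$ and $\mathrm d\pi_v : \mathrm{Hor}^S_v \to T_pM$. The plan is therefore to verify, for the diffeomorphism $\mathrm df : \mathcal T \to \mathcal T$ and its differential $\mathrm d^2f$ acting on $T\mathcal T$, three facts: (i) the vertical distribution is $\mathrm d^2f$-invariant; (ii) the horizontal distribution $\mathrm{Hor}^S$ is $\mathrm d^2f$-invariant; and (iii) on each summand, $\mathrm d^2f$ preserves the inner product induced by $g^F$. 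Item (i) is immediate, as $\mathrm df$ sends fibres of $TM$ to fibres.

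For (ii), since $\mathrm{Hor}^S = \mathrm{Ker}(\Gamma_S - \mathrm{Id})$ with $\Gamma_S = -\mathcal L_S \mathcal J$, and since $\mathcal J$ is $\mathrm d^2f$-invariant by Lemma~\ref{thm:diffoespreserve}, it suffices to prove that the geodesic spray itself is $\mathrm d^2f$-invariant, i.e., $\mathrm d^2f \circ S = S \circ \mathrm df$. This would follow from the fact that, being an $F$-isometry, $f$ carries $F$-geodesics to $F$-geodesics, via the variational characterization of geodesics as critical points of $\int F^2\,dt$: the pushforward $f\circ\gamma$ of a geodesic $\gamma$ is again a geodesic, and taking velocities identifies $\mathrm df$-images of integral curves of $S$ with integral curves of $S$. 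Consequently $\Gamma_S$, and hence $\mathrm{Hor}^S$, are $\mathrm d^2f$-invariant.

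For (iii), the essential linear-algebraic fact is $(\mathrm df_p)^* g_{\mathrm df(v)} = g_v$ for all $v \in \mathcal T_p$; this follows at once by differentiating twice the identity $F^2 \circ \mathrm df = F^2$ along the fibre $T_pM$, using linearity of $\mathrm df$ on fibres. Combining this with the second commutative diagram in the proof of Lemma~\ref{thm:diffoespreserve}, which intertwines $\mathfrak i_v$, $\mathfrak i_{\mathrm df(v)}$, $\mathrm df$ and $\mathrm d^2f|_{\mathrm{Ver}_v}$, yields the isometry property on the vertical summand; combining it instead with the first diagram, $\mathrm d\pi \circ \mathrm d^2f = \mathrm df \circ \mathrm d\pi$, yields the isometry property on the horizontal summand. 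Orthogonality of both splittings under $g^F_v$ and $g^F_{\mathrm df(v)}$ then finishes the verification.

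The only genuine obstacle is step (ii), the invariance of $S$. The variational argument sketched above is the cleanest, but one should check that it goes through smoothly in the conical pseudo-Finsler setting, where $F$ is not defined on all of $T_0M$ and the energy need not be bounded below. An alternative, entirely intrinsic route is to characterize $S$ as the unique second-order equation field on $\mathcal T$ (i.e., $\mathcal J(S) = C$) satisfying $\iota_S\,\mathrm d\theta_{F^2} = -\mathrm d(F^2)$, where $\theta_{F^2} = \mathcal J^*\mathrm d(F^2)$ is the Cartan $1$-form; since $\mathcal J$, $C$ and $F^2$ are all $\mathrm d^2f$-invariant, so are $\theta_{F^2}$ and $\mathrm d\theta_{F^2}$, and uniqueness forces $(\mathrm d^2f)_*S = S$.
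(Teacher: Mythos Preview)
Your proof is correct and follows essentially the same route as the paper: use Lemma~\ref{thm:diffoespreserve} for invariance of $\mathrm{Ver}$ and $\mathcal J$, deduce invariance of $\mathrm{Hor}^S$ from invariance of the spray $S$, and then read off the isometry on each summand from the two commutative diagrams in \eqref{eq:2commdiagramas}. The paper's version is terser on the two points you expand upon---it simply asserts that $\mathrm df$ preserves $S$ ``since $f$ is an isometry'' and leaves the relation $(\mathrm df_p)^*g_{\mathrm df(v)}=g_v$ implicit---so your added justification via the Cartan form characterization of $S$ and the fibrewise second derivative of $F^2\circ\mathrm df=F^2$ fills in exactly the details the paper omits, without changing the architecture of the argument.
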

\begin{proof}
We have already observed in Lemma~\ref{thm:diffoespreserve} that $\mathrm df$ preserves the vertical distribution $\mathrm{Ver}$ and the
quasi-tangent structure $\mathcal J$. Since $f$ is an isometry of $(M,\mathcal T,F)$, then $\mathrm df$ preserves the geodesic spray $S$.
Hence, by construction, $\mathrm df$ preserves also the orthogonal distribution $\mathrm{Hor}^S$.
The commutativity of the diagram on the left of \eqref{eq:2commdiagramas} (when $\mathrm d^2f$ is restricted to $\mathrm{Hor}_v^S$) shows that
$\mathrm d^2f$ preserves the restriction of $g^F$ to the horizontal distribution. The commutativity of the diagram on the right of \eqref{eq:2commdiagramas}
shows that $\mathrm d^2f$ preserves the restriction of $g^F$ to the vertical distribution. In conclusion, $\mathrm df$ is an isometry of
the pseudo-Riemannian manifold $(\mathcal T,g^F)$.
\end{proof}
\begin{remark}
It is also immediate to prove that, conversely, if $f:M\to M$ is a $C^2$-maps such that $\mathrm df(\mathcal T)=\mathcal T$ and such that
$\mathrm df\vert_{\mathcal T}:\mathcal T\to\mathcal T$ is a $g^F$-isometry, then infact $f$ is a diffeomorphism of $M$ and it is
an isometry of $(M,\mathcal T,F)$. For this, one uses the fact that $\mathcal T_p=\mathcal T\cap T_pM$ is a nonempty open subset of $T_pM$ for all $p\in M$.
\end{remark}
\subsection*{Final argument}
Let us denote by $\mathrm{Iso}(M,\mathcal T,F)$ the group of $\mathcal C^2$-isometries of the pseudo-Finsler structure $(M,\mathcal T,F)$, and by $\mathrm{Iso}(\mathcal T,g^F)$
the isometry group of the pseudo-Riemannian manifold $(\mathcal T,g^F)$. It is well known (see for instance \cite{Koba}) that $\mathrm{Iso}(\mathcal T,g^F)$ is a Lie group,
and that the natural action of $\mathrm{Iso}(\mathcal T,g^F)$ on $\mathcal T$ is smooth. Moreover, every element of $\mathrm{Iso}(\mathcal T,g^F)$
is determined by its first jet at any point of $\mathcal T$.

The proof of Theorem~A and its Corollary will be obtained directly from the following two results.
\begin{proposition}\label{thm:main}
The map $\mathrm{Iso}(M,\mathcal T,F)\ni f\mapsto\mathrm df\in\mathrm{Iso}(\mathcal T,g^F)$ is an injective group homomorphism,
whose image is closed in the $C^1$-topology.
\end{proposition}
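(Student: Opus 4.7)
The plan is to dispose of the easy algebraic properties first and then tackle closedness via a bundle-theoretic reconstruction of the base diffeomorphism from the isometry $\phi$ of $\mathcal T$. Well-definedness is precisely Proposition~\ref{thm:FinslSasisometries}; the homomorphism property is the chain rule $\mathrm d(f_1\circ f_2)=\mathrm df_1\circ\mathrm df_2$; and injectivity is immediate, since $\mathrm df=\mathrm{Id}_{\mathcal T}$ forces $f(p)=\pi\bigl(\mathrm df(v)\bigr)=\pi(v)=p$ for any $v\in\mathcal T_p$ (nonempty by hypothesis), so $f=\mathrm{Id}_M$.

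For closedness, let $\phi_n:=\mathrm df_n\to\phi$ in the $C^1$-topology with $\phi\in\mathrm{Iso}(\mathcal T,g^F)$. The strategy is to show that three structural features of each $\phi_n$ survive the limit: \textbf{(i)} $\phi_n$ permutes the fibres of $\pi|_{\mathcal T}$; \textbf{(ii)} $\phi_n|_{\mathcal T_p}$ is the restriction of a linear map $L_n^{(p)}\colon T_pM\to T_{f_n(p)}M$; \textbf{(iii)} $(\phi_n)^*\mathcal J=\mathcal J$, by Lemma~\ref{thm:diffoespreserve}. Property (i) is $C^0$-closed, since the identity $\pi\phi_n(v)=\pi\phi_n(v')$ for $v,v'\in\mathcal T_p$ passes to the limit. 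Property (ii) follows because $\mathcal T_p$ is open in $T_pM$: choosing $v_0\in\mathcal T_p$ and a basis $e_1,\dots,e_k$ of $T_pM$ with $v_0+\varepsilon e_i\in\mathcal T_p$, the identity $L_n^{(p)}(e_i)=\tfrac1\varepsilon\bigl(\phi_n(v_0+\varepsilon e_i)-\phi_n(v_0)\bigr)$, read in a local trivialisation of $TM$, shows that $L_n^{(p)}$ converges to a linear map $L^{(p)}$ whose restriction to $\mathcal T_p$ agrees with $\phi$. Property (iii) is a $C^1$-closed condition.

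Given these facts, define $f(p):=\pi\bigl(\phi(v)\bigr)$ for any $v\in\mathcal T_p$, well-posed by (i). A smooth local section $s$ of $\pi|_{\mathcal T}$ (existing because $\mathcal T_p$ is a nonempty open subset of $T_pM$) gives $f=\pi\circ\phi\circ s$ locally, so $f$ is smooth; the same argument applied to $\phi^{-1}$, itself a $C^1$-limit of the differentials $\mathrm d(f_n^{-1})$, yields smoothness of $f^{-1}$, so $f$ is a diffeomorphism of $M$. To upgrade $\phi$ to the actual differential $\mathrm df$, I will compute $\mathrm d\phi_v\bigl(\mathfrak i_v(w)\bigr)$ in two ways: using (ii) through the curve $t\mapsto v+tw$ gives $\mathfrak i_{\phi(v)}\bigl(L^{(p)}(w)\bigr)$, whereas using (iii) together with the identity $\mathrm d\pi\circ\mathrm d\phi=\mathrm df\circ\mathrm d\pi$ gives $\mathfrak i_{\phi(v)}\bigl(\mathrm df_p(w)\bigr)$. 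Comparing, $L^{(p)}=\mathrm df_p$, and hence $\phi=\mathrm df|_{\mathcal T}$.

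It remains to check $f\in\mathrm{Iso}(M,\mathcal T,F)$. The equality $\mathrm df(\mathcal T)=\phi(\mathcal T)=\mathcal T$ is immediate, and $F\circ\mathrm df=F$ follows from the pointwise identity $F(v)^2=g_v(v,v)=g^F_v(C_v,C_v)$, a consequence of Euler's theorem applied to the $2$-homogeneous function $F^2$, combined with the fact that $\phi$ is a $g^F$-isometry and preserves the Liouville field $C$ (automatic for any fibrewise linear diffeomorphism, since such a map commutes with radial dilations). The step I expect to be most delicate is the identification $\phi=\mathrm df$ rather than merely a smooth fibrewise linear bundle map covering $f$: on a general vector bundle many distinct bundle automorphisms cover the same base diffeomorphism, and it is precisely the $C^1$-persistence of $\mathcal J$-invariance---unavailable from $C^0$-information alone---that rigidifies the limit into an honest differential.
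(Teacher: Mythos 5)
Your proof is correct, and for the crucial closedness step it takes a genuinely different route from the paper's. The paper argues on the level of the sequence: from $\mathrm df_n\to\Psi$ in $C^1$ it first deduces, via the same local-section formula $f_n=\pi\circ\mathrm df_n\circ s$, that $f_n$ converges in $C^1$ to a diffeomorphism $f_\infty$, and then simply observes that $C^1$-convergence of $f_n$ forces $\mathrm df_n\to\mathrm df_\infty$ pointwise, so $\Psi=\mathrm df_\infty$ by uniqueness of limits. You instead work intrinsically with the limit map $\phi$, showing that three properties of the $\phi_n$ (fibre-preservation, fibrewise linearity, $\mathcal J$-invariance) survive $C^1$-limits, and then proving a small rigidity statement: a fibrewise-linear bundle automorphism covering a diffeomorphism $f$ and commuting with the quasi-tangent structure must equal $\mathrm df$. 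This is more work, but it isolates exactly which structure forces the limit to be an honest differential rather than an arbitrary bundle automorphism over $f$ --- a point the paper compresses into ``by elementary arguments'' --- and it reuses Lemma~\ref{thm:diffoespreserve} in an essential way. (In fact, once you know $\phi$ is fibrewise linear you could shortcut the $\mathcal J$-argument by noting $L^{(p)}_n=\mathrm d(f_n)_p$ as linear maps, since they agree on the open set $\mathcal T_p$, and passing to the limit; but your version is self-contained.) Two cosmetic remarks: the verification $F\circ\mathrm df=F$ can be done in one line from $F\circ\mathrm df_n=F$ and pointwise convergence, whereas your detour through $g^F(C,C)$ essentially reproves the Remark following Proposition~\ref{thm:FinslSasisometries} (and note that with the paper's normalization $g_v(v,v)=2F(v)^2$, which is harmless); and the claim that $\phi^{-1}$ is the $C^1$-limit of the $\mathrm d(f_n^{-1})$ deserves a word, e.g.\ continuity of inversion in $\mathrm{Iso}(\mathcal T,g^F)$ with the $C^1$-topology.
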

\begin{proof}
The given map is a group homomorphism, by the chain rule; it is obviously injective.
In order to prove that its image is closed in the $C^1$-topology, assume that $f_n$ is a sequence of $C^2$-diffeomorphisms of $M$, with $\mathrm df_n(\mathcal T)=\mathcal T$
such that $(\mathrm df_n)\big\vert_{\mathcal T}$ converges as $n\to\infty$ in the $C^1$-topology to a $C^1$-diffeomorphism $\Psi:\mathcal T\to\mathcal T$, then,
by elementary arguments:
\begin{itemize}
\item[(a)] $f_n$ is $C^1$-convergent to some diffeomorphism $f_\infty$ of $M$ (namely, if $s$ is a local section of $TM$ taking values in $\mathcal T$,
then locally $f_n=\pi\circ(\mathrm df_n)\circ s$);
\item[(b)] $\Psi=\mathrm df_\infty$, and thus $f_\infty$ is of class $C^2$.
\end{itemize}
This concludes the proof.
\end{proof}
The statements in Theorem~A and its Corollary follow almost entirely from Proposition~\ref{thm:main}.
As to the action of $\mathrm{Iso}(M,\mathcal T,F)$, what Proposition~\ref{thm:main} says is that the map
$\mathrm{Iso}(M,\mathcal T,F)\times\mathcal T\ni(f,v)\mapsto\mathrm df(v)\in\mathcal T$ is smooth, and it is
a Lie transformation group of $\mathcal T$. From this, it follows easily (see Lemma~\ref{thm:Lietransfgroupfb} below)
that the natural action of $\mathrm{Iso}(M,\mathcal T,F)$ on $M$ is a Lie transformation group of $M$.
\subsection*{Lie transformation groups of submersions}
Assume that $q:E\to B$ is a smooth surjective submersion, and let $f:E\to E$ be a diffeomorphism
that carries fibers of $q$ (diffeomorphically) onto fibers. Then one has an induced map $\widetilde f:B\to B$, which is again
a diffeomorphism. If $G$ is a Lie transformation group of $E$ such that the action of every element $g\in G$, $E\ni x\mapsto g\cdot x\in E$,
carries fibers onto fibers, then one has an induced action of $G$ on the base $B$. We state the following elementary result, which may have some interest of its own.
\begin{lemma}\label{thm:Lietransfgroupfb}
Let $q:E\to B$ be a smooth surjective submersion, and let $G$ be a Lie transformation group of the total space $E$.
Assume that the action of each element of $G$ carries fibers of $q$ onto fibers. Then, the induced action of
$G$ on the base $B$ makes $G$ into a Lie transformation group of $B$.
\end{lemma}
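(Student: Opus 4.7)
The plan is to reduce the whole statement to the local section property of submersions. First, I would verify that $g \mapsto \widetilde g$ is a well-defined group homomorphism from $G$ into the bijections of $B$: the intertwining relation $q \circ g = \widetilde g \circ q$ uniquely determines $\widetilde g$ on points (because two elements in the same $q$-fiber are sent by $g$ to elements in a common $q$-fiber), multiplicativity is an immediate diagram chase, and $\widetilde{g^{-1}} = (\widetilde g)^{-1}$, so each $\widetilde g$ will automatically be a diffeomorphism of $B$ as soon as smoothness is established.

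The core step is to prove smoothness of the joint action $\Phi\colon G \times B \to B$, $(g,b)\mapsto \widetilde g(b)$. I would argue locally: fix $b_0 \in B$ and use the submersion hypothesis on $q$ to pick an open neighborhood $U$ of $b_0$ in $B$ and a smooth local section $s\colon U \to E$ with $q \circ s = \mathrm{id}_U$. The observation that makes the whole argument work is the formula
\[\widetilde g(b) \,=\, q\bigl(g \cdot s(b)\bigr),\qquad g\in G,\ b\in U,\]
which holds because $g\cdot s(b)$ lies in the $q$-fiber over $\widetilde g(b)$. The right-hand side is the composition of the smooth map $(g,b)\mapsto(g,s(b))$, the smooth action $G\times E\to E$ (this is where the Lie transformation group hypothesis on $E$ enters), and the smooth submersion $q$. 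Hence $\Phi$ is smooth on $G\times U$, and since $b_0$ was arbitrary $\Phi$ is smooth on $G\times B$. Combined with the first paragraph, this shows that $G$ acts smoothly on $B$ by diffeomorphisms, which is exactly the statement that $G$ is a Lie transformation group of $B$.

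I do not expect any genuine obstacle: once a local section $s$ is in hand, all smoothness in sight is inherited from smoothness of the action on $E$. The only subtlety worth flagging is that the formula $\widetilde g(b)=q(g\cdot s(b))$ depends on the choice of section $s$ while its value does not; this invariance is what permits the local smoothness arguments on different charts of local sections to be patched together, and also explains why one should not look for a global formula independent of a section (in general none exists).
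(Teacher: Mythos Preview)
Your argument is correct and is precisely the approach the paper takes: the paper's proof is the single sentence that smoothness of the induced action follows from the existence of local sections of $q$, and you have spelled out exactly that, via the formula $\widetilde g(b)=q\bigl(g\cdot s(b)\bigr)$.
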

\begin{proof}
The smoothness of the induced action of $G$ on $B$ follows easily from the existence of local sections of $q$.
\end{proof}
The proof of Theorem~A is concluded by applying Lemma~\ref{thm:Lietransfgroupfb} to the surjective submersion $\pi\vert_\mathcal T:\mathcal T\to M$ and
to the Lie transformation group $\mathrm{Iso}(M,\mathcal T,F)\times\mathcal T\ni(f,v)\mapsto\mathrm df(v)\in\mathcal T$.
\subsection*{The Finsler case: proof of Theorem~B}
Given a Finsler structure $(M,F)$, one can define a Riemannian metric $h_F$ on $M$ obtained as the average of the fundamental tensor.
More precisely, for all $p\in M$, let $\Sigma_p$ be the indicatrix of $F$ at $p$:
\[\Sigma_p=\big\{v\in T_pM : F(v)=1\big\}.\]
Then, $h_F$ is defined by:
\begin{equation}
h_F(v,w)=\int_{\Sigma_p} g_u(v,w)\,\mathrm d\Omega_p(u),
\end{equation}
where $v,w\in T_pM$, and $\mathrm d\Omega_p$ is the volume associated to the Riemannian metric in $\Sigma_p$ given by the restriction
of the fundamental tensor. This averaged metric was first defined in \cite{ricardo}.

The proof of Theorem~B is obtained readily from the following:
\begin{proposition}
The group of isometries of $(M,F)$ is contained as a closed subgroup of $\mathrm{Iso}(M,h_F)$ (in the compact--open topology).
\end{proposition}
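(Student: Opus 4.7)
The plan is to establish the proposition via two independent claims: first, the inclusion $\mathrm{Iso}(M,F)\subseteq\mathrm{Iso}(M,h_F)$ as abstract groups; second, the closedness of the image inside $\mathrm{Iso}(M,h_F)$ with respect to the compact--open topology. Given these, the conclusion follows because $\mathrm{Iso}(M,h_F)$ is a Lie transformation group of $M$ by Myers--Steenrod, and a closed subgroup of a Lie group inherits a Lie subgroup structure whose action remains smooth; Theorem~B then follows at once. The regularity and jet determination statements come for free, as every $h_F$-isometry is smooth and determined by its first jet.

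For the inclusion, I would start from the defining identity $F\circ\mathrm df=F$ and differentiate twice in the vertical fiber directions to obtain the pointwise tensorial identity $g_{\mathrm df(u)}(\mathrm df\,\xi,\mathrm df\,\eta)=g_u(\xi,\eta)$, valid for every $u\in T_0M$ and every $\xi,\eta\in T_{\pi(u)}M$. In particular, $\mathrm df$ sends the indicatrix $\Sigma_p$ diffeomorphically onto $\Sigma_{f(p)}$, and its restriction is a Riemannian isometry between the two hypersurfaces endowed with the restriction of the fundamental tensor. Consequently, the induced volume forms satisfy $(\mathrm df\vert_{\Sigma_p})^*\mathrm d\Omega_{f(p)}=\mathrm d\Omega_p$. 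A change of variables $u=\mathrm df(u')$ in the averaging integral, combined with the tensorial identity above, then yields $h_F(\mathrm df\,v,\mathrm df\,w)=h_F(v,w)$ for all $v,w\in T_pM$, establishing the inclusion.

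For closedness, suppose $\{f_n\}\subset\mathrm{Iso}(M,F)$ converges in the compact--open topology to some $f\in\mathrm{Iso}(M,h_F)$. Since $\mathrm{Iso}(M,h_F)$ is a Lie group on which the compact--open topology coincides with every $C^k$-topology (Myers--Steenrod, Palais), the convergence is in fact $C^1$; in particular, $\mathrm df_n\to\mathrm df$ uniformly on compact subsets of $T_0M$. Using continuity of $F$ on $T_0M$, one gets $F(\mathrm df\,v)=\lim_n F(\mathrm df_n\,v)=\lim_n F(v)=F(v)$ for every $v\in T_0M$, so $f$ lies in $\mathrm{Iso}(M,F)$.

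The main obstacle is the indicatrix step in the inclusion argument: one must verify that the Riemannian structures on $\Sigma_p$ and $\Sigma_{f(p)}$, which are constructed pointwise from the fundamental tensor, are genuinely intertwined by $\mathrm df$, so that the induced volume forms match under pullback. The key point making this work is that an $F$-isometry preserves the fundamental tensor as a \emph{field} on $T_0M$ and not merely at a single base point, and this is exactly what forces the change-of-variables step to collapse to the averaging identity.
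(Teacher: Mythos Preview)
Your proof is correct and follows essentially the same route as the paper's: show that a Finsler isometry preserves the fundamental tensor and the indicatrix volume forms, hence the averaged metric $h_F$; then argue closedness by noting that the compact--open topology and the $C^1$-topology agree on $\mathrm{Iso}(M,h_F)$, so that the $C^1$-closed condition $f^*F=F$ is also compact--open closed. Your write-up supplies more detail (the second vertical differentiation to obtain the tensorial identity for $g_u$, the explicit change of variables, and the sequential closedness argument), but the structure and key ingredients coincide with the paper's proof.
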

\begin{proof}
If $f:M\to M$ is a diffeomorphism that preserves $F$, then $\mathrm df$ carries indicatrices onto indicatrices, and it also preserves
the fundamental tensor of $F$, as well as the volume forms on the indicatrices associated to the fundamental tensor. Thus, $f$ preserves $h_F$.
The condition $f^*(F)=F$ is closed in the $C^1$-topology, hence $\mathrm{Iso}(M,F)$ is closed in $\mathrm{Iso}(M,h_F)$ with respect to the
$C^1$-topology. On the other hand, the compact--open topology and the $C^1$-topology coincide on  $\mathrm{Iso}(M,h_F)$.
This concludes the proof.
\end{proof}
\end{section}

\end{document}